\documentclass[a4paper,12pt]{amsart}
\usepackage{amssymb}
\usepackage{latexsym}
\usepackage{amsmath}
\usepackage{enumerate}
\usepackage{amsmath, hyperref}
\usepackage{color}
\usepackage{tikz}
\usepackage{geometry}
\newtheorem{theorem}{Theorem}[section]

\theoremstyle{definition}

\numberwithin{equation}{section}

\title{The Limits of Arithmetical Pluralism: Incompleteness, Large Cardinals, and Graded Non-Pluralism}

\author{Yong Cheng}

\begin{document}

\begin{abstract}
G\"odelian incompleteness yields arithmetical sentences \(A\) such that \(\mathsf{PA}+A\) and \(\mathsf{PA}+\neg A\) are both consistent. Are such extensions equally legitimate? I propose graded epistemic arithmetical non-pluralism: justification for choosing between them varies with the set-theoretic strength of \(A\). I defend \(\mathsf{PA}+\mathsf{Con}(\mathsf{PA})\) and show Koellner's non-pluralism for first-order arithmetic is inadequate given Friedman's concrete incompleteness. Resolving the selection problem for such sentences turns on justifying large cardinals. Defending graded non-pluralism thus engages G\"odel's programme and the justification of very large cardinals. 
\end{abstract}

\keywords{Arithmetical pluralism,  The incompleteness phenomenon, The problem of selection, Large cardinals}

\maketitle

\noindent\small
\textit{This is the author accepted manuscript (AAM). It has been accepted for publication in \emph{Philosophia Mathematica}. The final version of record will be available at the journal's website. Please cite the published version.}
\vspace{1em}

\section{Introduction}\label{section1}
Gödel-Rosser incompleteness yields arithmetical sentences \(A\) such that \(\mathsf{PA} + A\) and \(\mathsf{PA} + \neg A\) are both consistent.\footnote{The G\"odel--Rosser first incompleteness theorem states that any consistent, recursively axiomatisable extension of \(\mathsf{PA}\) is incomplete: for some \(\phi\), neither \(\phi\) nor \(\neg\phi\) is provable. (G\"{o}del's original proof required the stronger assumption of \(\omega\)-consistency.) The second theorem adds that no such theory proves its own canonical consistency statement.} Are such incompatible extensions equally legitimate? This question admits three dimensions: epistemic (equally justified), semantic (true in distinct frameworks), and ontological (equally real structures). We focus on the epistemic intra-framework variant.

Our central contribution is a proposal for \emph{graded epistemic arithmetical non-pluralism}: the justification for choosing between $\mathsf{PA}+\phi$ and $\mathsf{PA}+\neg\phi$ varies with \(\phi\)'s set-theoretic strength. We defend $\mathsf{PA}+\mathsf{Con}(\mathsf{PA})$ on multiple theoretical grounds, then show that Koellner's defence of non-pluralism for first-order arithmetic is inadequate given Friedman's concrete incompleteness: certain arithmetical sentences require large-cardinal axioms for their proof. We compare intra- and inter-framework pluralism, assess internal categoricity's conditional anti-pluralist force, and examine radical pluralism's self-defeat objection---extracting the lesson that consistency is insufficient and substantive criteria are needed. Defending graded non-pluralism thus unavoidably engages G\"odel's programme and the justification of very large cardinals.

The paper proceeds as follows. Section \ref{section2} analyses Koellner's view. Section \ref{section4} compares intra- and inter-framework pluralism. Section \ref{section5} evaluates internal categoricity. Section \ref{section3} examines radical pluralism and its limitations. Section \ref{section6} addresses the selection problem, first for \(\mathsf{Con}(\mathsf{PA})\), then for concrete independent sentences. Section \ref{section7} articulates graded epistemic non-pluralism. Section \ref{conclusion} concludes.

\section{Koellner on Epistemic Arithmetical Pluralism}\label{section2}

We examine Koellner's epistemic intra-framework pluralism and his defence of non-pluralism for first-order arithmetic.

\subsection{Theoretical Background}\label{section2.1}

Koellner [2013, p.~7] characterises mathematical pluralism as the question whether mutually incompatible theories are equally legitimate from the standpoint of theoretical reason and mathematical truth. On his epistemic reading, pluralism is the thesis that conflicting theories are equally justified relative to a single fixed framework.
 
Koellner uses interpretability to compare theories: \(S\) is interpretable in \(T\) (\(S \leq T\)) if every theorem of \(S\) is provable in \(T\) after translation.\footnote{For a precise definition of interpretation, see Visser [2017].} If $S\leq T$ and $T\nleqslant S$, we write $S< T$. If $S\leq T$ and $T\leq S$, we say that $S$ and $T$ are mutually interpretable. This relation allows theories formulated in different languages to be compared according to their expressive strength. In general the interpretability order is non-linear and ill-founded.\footnote{One can show, for example, that between any two theories $S < T$ extending $\mathsf{PA}$ there exists a third theory $U$ with $S < U < T$, and that there are incomparable theories above any given theory [Koellner, 2013].} But among the theories that arise in mainstream mathematical practice—what Walsh [2025] calls `natural' theories—the order is well-ordered: any two such theories line up on a well-founded path, from Robinson Arithmetic $\sf Q$ through \(\mathsf{PA}\), second-order arithmetic, \(\mathsf{ZFC}\), and large cardinal axioms [Koellner, 2009, 2013].

For Koellner, pluralism about a theory \(T\) is the claim that incompatible consistent extensions \(T+\phi\) and \(T+\neg\phi\) are equally legitimate. Koellner's position on mathematical pluralism is explicitly layered and sensitive to this hierarchical structure.
He  frames the debate between pluralists and non-pluralists as itself hierarchical: 
\begin{quote}
The debate between the pluralist and the non-pluralist is really a hierarchy of debates. At the one extreme there is pluralism with regard to all
of mathematics and at the other extreme there is non-pluralism with regard to all of mathematics. The intermediate positions (which are much more common) involve embracing non-pluralism for certain domains (say first-order arithmetic), while advocating pluralism with regard to others [Koellner, 2013, p. 2].
\end{quote}

Koellner [2009, 2013] firmly rejects pluralism for first-order arithmetic, contending that our clear conception of the natural numbers provides strong intrinsic justification and leaves no credible alternative theories.

For second-order arithmetic, Koellner [2009, 2013] advocates non-pluralism, arguing that theoretical reasons—especially deep interconnections between determinacy, large cardinals, inner models, and generic absoluteness—furnish a compelling case for axioms like $AD^{L(\mathbb{R})}$ and support objective truth at this level.\footnote{Using Woodin's $\Omega$-logic, Koellner argues that large cardinal axioms can provide an ``effectively complete" picture of certain structures (like $L(\mathbb{R})$), thereby restricting the scope of pluralism. See Koellner [2009, 2013] for details.} 

For third-order arithmetic and beyond, however, Koellner leaves open the possibility of pluralism, particularly concerning statements like the Continuum Hypothesis ({\sf CH}), where our conceptual grasp may be less determinate and independence phenomena are pervasive.\footnote{The Continuum Hypothesis ({\sf CH}) is most naturally formulated as a statement of third-order arithmetic.} This openness reflects his methodological commitment to letting philosophical conclusions be guided by mathematical developments.  Koellner [2009] outlines a scenario in which one could arguably maintain that pluralism holds even at the level of third-order arithmetic [Koellner, 2009, p. 81].

In summary, Koellner [2009, 2013] defends a nuanced, tiered account of mathematical pluralism. He rejects global pluralism, particularly for first-order arithmetic, on the grounds that our conception of the natural numbers is sufficiently clear to preclude credible alternatives. At higher levels, he maintains that theoretical reason—guided by mathematical practice and advanced set-theoretic results—can restrict the scope of pluralism. Koellner permits local pluralism only at the potential limits of mathematical theory, such as in the case of {\sf CH}, where conceptual clarity fades and decisive theoretical reasons may be lacking. For Koellner, pluralism is not a default philosophical stance but a potential mathematical discovery—a ``last resort" to be invoked only if mathematical investigation reveals inextricable bifurcations that no theoretical reason can resolve.

\subsection{Analysis of Koellner's Defense of Non-Pluralism in First-Order Arithmetic}\label{section2.2}

Koellner's epistemic intra-framework pluralism holds that multiple incompatible arithmetical theories can be equally justified, and hence equally legitimate, on theoretical grounds within a single linguistic framework. This is crucially distinct from semantic arithmetical pluralism, which relativizes truth to different languages or conceptual roles. For Koellner, the debate remains firmly within a shared language, with the standard model of arithmetic serving as the ultimate arbiter of justification.\footnote{We use `standard model' as a placeholder for whatever structure the realist takes to be the intended one; we do not assume its ultimate determinateness. A central upshot of our later argument---via Friedman's concrete incompleteness---is that if the standard model is characterized solely by arithmetical truth, and deciding such truths requires climbing the set-theoretic hierarchy via large cardinal justification, then its purported clarity is called into question. See Cheng [2025] for a recent discussion of arithmetical truth.}
 
An arithmetical theory is any theory formulated in a language definitionally equivalent to that of {\sf PA}.\footnote{The language of {\sf PA} is $\{0, S, +, \times\}$. A theory with an additional symbol $<$, defined by $x < y \leftrightarrow \exists z (x + Sz = y)$, is definitionally equivalent.  Natural arithmetical theories include the Tarski-Mostowski-Robinson theory $\sf R$ [Tarski et al., 1953], Robinson's Arithmetic $\sf Q$, Bounded Arithmetic $\sf S^1_2$, Elementary Arithmetic $\sf EA$, Primitive Recursive Arithmetic $\sf PRA$ and $\sf PA$ ([H\'{a}jek \& Pudl\'{a}k, 1993]), ordered by interpretability as $\mathsf{R}<\mathsf{Q}\equiv {\sf S^1_2}< \mathsf{EA}< \mathsf{PRA}< \mathsf{PA}$.} Philosophers with different commitments endorse different theories. For instance, strict finitists [Nelson, 1986] accept Robinson Arithmetic  ($\sf Q$) and theories mutually interpretable with $\sf Q$, but reject stronger theories like Elementary Arithmetic ($\sf EA$). Finitists [Tait, 1981] accept Primitive Recursive Arithmetic ($\sf PRA$) but reject $\sf PA$.  Predicativists (Feferman) accept $\sf ATR_0$ while rejecting full second-order arithmetic [Koellner, 2013]. This paper does not endorse strict finitism, finitism, or predicativism. Although weaker arithmetical theories are valuable in fields like proof complexity and computational complexity, our focus is restricted to arithmetical theories that interpret $\mathsf{PA}$.  We therefore assume the acceptance of $\mathsf{PA}$ and theories mutually interpretable with $\mathsf{PA}$.\footnote{We take {\sf PA} to be consistent. If {\sf PA} were inconsistent, no sentence of {\sf PA} would be independent, as every sentence would be provable in {\sf PA}.}

Koellner [2009] explicitly rejects arithmetical pluralism, arguing that it is untenable for first-order arithmetic: 
\begin{quote} 
\small
There is currently no convincing case for pluralism with regard to first-order arithmetic and most would agree that given the clarity of our conception of the structure of the natural numbers and given our experience to date with that structure such a pluralism is simply untenable [Koellner, 2009, p. 98].
\end{quote}

The core of his position is the alleged clarity and determinacy of our conception of the standard natural-number structure [Koellner, 2009].  For Koellner, this conception grounds objective truth values for arithmetical sentences.   Consequently, for any independent sentence $\phi$ of {\sf PA}, the choice between ${\sf PA} + \phi$ and ${\sf PA} + \neg\phi$ is not symmetric: only the extension that aligns with the truth in the standard model is legitimate. This realist stance provides, in his view, sufficient grounds to resolve any instance of the selection problem, thereby defeating pluralism.

Koellner [2009] notes a specific result that bolsters his case: any $\Pi^0_1$ sentence independent of $\sf PA$ must be true.\footnote{This follows from the $\Sigma^0_1$-completeness of {\sf PA}: if a $\Pi^0_1$ sentence $\phi$ were false, $\neg\phi$  would be a true $\Sigma^0_1$ sentence and hence provable, contradicting independence.} Since ${\sf Con(PA)}$ is a $\Pi^0_1$ sentence, it follows that ${\sf PA}+\sf Con(PA)$ is legitimate while ${\sf PA}+\neg \sf Con(PA)$ is not, as it asserts a falsehood in the standard model. However, Koellner [2009] does not extend this detailed analysis to independent sentences of higher complexity.
 
Koellner's view on epistemic arithmetical pluralism is rooted in mathematical realism. For the realist, Gödel's incompleteness theorems reveal the inherent limits of formal axiomatization without threatening the determinacy of arithmetical truth or the clarity of our conception of the standard model. Independence phenomena thus do not compel pluralism: on the realist view, our conceptual grasp of the natural numbers precludes it for first-order arithmetic.\footnote{True arithmetic ({\sf TA})—the set of all sentences true in the standard model—is complete, yet not recursively axiomatizable. Gödel's theorems show that {\sf PA}  is a proper subset of {\sf TA}, which underscores the limits of formalization while leaving the determinacy of the standard model intact.} 

\section{Intra-Framework vs. Inter-Framework Arithmetical Pluralism}\label{section4}

This section compares the two varieties of arithmetical pluralism that 
have received the most attention in recent literature: Koellner's 
epistemic intra-framework pluralism and Picollo and Waxman's semantic 
inter-framework pluralism. The aim is not to present them as rivals, 
but to clarify their respective commitments, points of divergence, and 
the different questions they address.
  
\subsection{Inter-Framework Arithmetical Pluralism}\label{P-W analysis}

Picollo and Waxman [2025] defend \emph{inter-framework arithmetical pluralism}: conflicting arithmetical theories can each be true in their own languages. This is a semantic thesis: truth is relativised to linguistic frameworks. For numerous arithmetical sentences $\phi$, both $\mathsf{PA}+\phi$ and $\mathsf{PA}+\neg\phi$ can be true relative to 
different languages.

The view rests on four key ideas. First, an arithmetical theory is 
characterised not by formal syntax alone, but by the similarity of its 
conceptual or inferential role to that of arithmetic. Second, formal 
syntax is distinguished from the philosophical notion of a language: a 
language includes its intended interpretation or conceptual role, and 
two communities using syntactically identical strings with different 
conceptual roles are, philosophically, speaking different languages. Third, Picollo and Waxman adopt a conceptual-role semantics: the truth 
of an arithmetical sentence is determined internally by the rules 
governing its use in a given language. Fourth, consistency is necessary 
but not sufficient: a theory could be consistent yet fail to count as 
arithmetical if it does not preserve the core conceptual-inferential 
role of arithmetic. Inter-framework pluralism is thus constrained by 
conceptual-role similarity; it does not license just any consistent 
theory, but only those that play an arithmetical role.
 
\subsection{Comparison and Compatibility}
Inter-framework pluralism and intra-framework pluralism differ in 
dimension. Inter-framework pluralism is a thesis about \emph{truth across languages}: can conflicting arithmetical sentences both be true in their 
respective frameworks? Intra-framework pluralism is a thesis about 
\emph{justification within a fixed language}: are conflicting extensions of 
${\sf PA}$ equally justified on theoretical grounds within a single 
shared framework? These are different questions and require different 
arguments.
  
The two views are compatible but not identical. One could consistently 
hold both: $\mathsf{PA}+\mathsf{Con}(\mathsf{PA})$ and 
$\mathsf{PA}+\neg\mathsf{Con}(\mathsf{PA})$ might be equally justified 
within a fixed language and each true in its own distinct language. 
Compatibility follows because the views operate on different 
dimensions: justification does not entail truth, and truth-relativism 
does not entail equal justification. But compatibility does not mean 
identity. 

Intra-framework pluralism must explain why theoretical reason 
fails to discriminate between two extensions of the same theory within 
a single language. Inter-framework pluralism must explain how two 
incompatible theories can each be true in their own terms, and what 
constraints govern the notion of ``the same arithmetical language.'' 
These tasks should not be conflated.
 
The two views are orthogonal, not rival: intra-framework pluralism concerns justification within a fixed language; inter-framework pluralism concerns truth across languages. Conflating them invites a category error—a thesis about justification is not a thesis about truth. Our positive proposal, graded epistemic non-pluralism, falls squarely within the intra-framework dimension: it addresses justification of competing $\mathsf{PA}$-extensions and does not engage semantic or ontological pluralism.
 
\section{Internal Categoricity as a Conditional Anti-Pluralist Tool}\label{section5}

This section examines internal categoricity as a potential anti-pluralist tool and argues that its force is conditional on meta-theoretic commitments. Our aim is not to provide a comprehensive assessment of categoricity arguments, but to show why they cannot do the anti-pluralist work that some have hoped. This negative conclusion clears the ground for the positive, set-theoretic criteria we develop in Sections \ref{section6} and \ref{section7}.

\subsection{Internal Categoricity and Its Meta-Theoretic Commitments}

External categoricity, the standard model-theoretic notion, requires all models of a theory to be isomorphic. Proving external categoricity for second-order arithmetic demands a strong background meta-theory, typically set theory.\footnote{Dedekind's result shows that full second-order arithmetic is categorical. By contrast, first-order {\sf PA} has continuum many non-isomorphic nonstandard models.} Internal categoricity offers a more modest, theory-internal alternative: within a suitably enriched version of the theory itself, one can prove that any two interpretations of its axioms are isomorphic.
 
For {\sf PA}, internal categoricity shows that, within a modest extension of 
{\sf PA}, any two interpretations satisfying its axioms are provably 
isomorphic (Parsons [2008]; V\"{a}\"{a}n\"{a}nen [2021]; Maddy and V\"{a}\"{a}n\"{a}nen [2023]). This has been proposed as a way to resist arithmetical pluralism: 
if any two copies of {\sf PA} are provably isomorphic from within the theory, 
then perhaps there is a unique intended structure after all.
 
However, as Maddy and V\"{a}\"{a}n\"{a}nen's technical analysis reveals, what can be proved ``internally'' depends essentially on the background language and theory. Two requirements are particularly noteworthy. First, a \emph{background theory requirement}: when two copies of {\sf PA} are interpreted over distinct domains $N_1$ and $N_2$, the proof requires an additional background theory of arithmetic (the ``leading'' {\sf PA} in the sequent) to govern both copies and permit coding of the isomorphism. The standalone union $PA_1(N_1) \cup PA_2(N_2)$ does not suffice.\footnote{Here \(PA_i(N_i)\) denotes the \(i\)-th copy of Peano Arithmetic, with its quantifiers relativized to the domain \(N_i\) (for \(i=1,2\)). The subscripts distinguish the two copies and their associated domains; the languages are assumed to be suitably renamed to avoid collision with the background theory.} Second, the \emph{scope of induction}: for the proof to succeed, the induction schemas in \(PA_{1}(N_{1})\) and \(PA_{2}(N_{2})\) must apply to formulas in the full combined vocabulary, including the symbols of the background {\sf PA}, not merely to the symbols of the two copies. These requirements underscore the centrality of meta-theoretic commitments in applying internal categoricity.
 
\subsection{Why Internal Categoricity Cannot Defeat Pluralism}
Internal categoricity fails to refute either variety of arithmetical pluralism. The theorem's construction is fundamentally intra-linguistic: it requires merging two copies of {\sf PA} into a single combined framework, a commitment the pluralist is not obliged to accept.
 
\emph{Against inter-framework pluralism.} Inter-framework pluralism is a thesis about truth across different languages. Internal categoricity proves an isomorphism between two copies only within a combined language that already integrates both frameworks. But the inter-framework pluralist denies that such integration is necessary or legitimate: truth, for them, is determined by the conceptual role each theory plays in its own linguistic practice, and there is no neutral meta-language from which both can be assessed. Thus, the theorem does not engage the core claim of inter-framework pluralism.

\emph{Against intra-framework pluralism.} Intra-framework pluralism is a thesis about equal justification within a fixed language. Internal 
categoricity does not address justification at all. Even if two copies 
of $\mathsf{PA}$ are provably isomorphic, it does not follow that their 
extensions---e.g., $\mathsf{PA}+\mathsf{Con}(\mathsf{PA})$ and 
$\mathsf{PA}+\neg\mathsf{Con}(\mathsf{PA})$---are not equally justified. 
Isomorphism establishes structural sameness, not epistemic parity.
 
\emph{Conditionality of the argument.} For internal categoricity to have any 
anti-pluralist force, one must accept: (i) the legitimacy of combining 
distinct linguistic frameworks into a single meta-language; (ii) the 
assumption that the induction schemas of the two copies apply to 
formulas in the full combined vocabulary; and (iii) the background 
theory (e.g., the leading $\mathsf{PA}$) that governs both copies. These are 
precisely the commitments that a pluralist---especially an 
inter-framework pluralist---can reject without inconsistency. The 
theorem's force is therefore not inherent but conditional: it convinces only those who already accept a unifying meta-theoretic perspective.
  
\subsection{Negative Ground-Clearing: A Role for the Failure}

The failure of internal categoricity to refute pluralism is not a dead end but a revealing diagnostic. It shows that \emph{anti-pluralism cannot be secured by purely syntactic or model-theoretic means}---by a theorem that establishes uniqueness of structure from within the theory itself. Any such argument must presuppose a background framework that already embodies substantive commitments about how languages, interpretations, and truth are to be assessed.

This limitation bears directly on our positive project. Our graded epistemic non-pluralism (Sections~\ref{section6} and~\ref{section7}) does not rely on internal categoricity or on the metaphysical unification of linguistic frameworks. It is grounded in substantive, practice-based criteria of theory choice---interpretability strength, hierarchical coherence, and extrinsic success---that operate within mainstream set-theoretic mathematics. These criteria do not pretend to be neutral or framework-transcendent; they derive their normative force from a critical assessment of established set-theoretic practice. The lesson of internal categoricity's failure is that we should abandon the search for a purely internal, syntactic guarantee of uniqueness and instead engage with the actual epistemological and set-theoretic considerations that guide theory choice in mathematics.

In this sense, the failure of internal categoricity \emph{reinforces} the need for the graded approach we develop below: it demonstrates that any viable non-pluralism must be grounded not in model-theoretic uniqueness theorems but in the robust, multi-dimensional criteria of theory selection that mathematical practice itself supplies. We will turn to these criteria in Section~\ref{section6}, beginning with the selection problem for \(\mathsf{Con}(\mathsf{PA})\).   

\section{Radical Arithmetical Pluralism and Its Limitations}\label{section3}

Before developing our graded framework, we examine the most permissive pluralist position: radical arithmetical pluralism, according to which any consistent arithmetical theory is legitimate. This view serves as a foil: it represents the minimal standard for any pluralist position, and its reliance on consistency alone exposes why a more discriminating framework is needed. Our aim is to show that radical pluralism is coherent but theoretically idle—demonstrating the inadequacy of consistency as a sole criterion and clearing the ground for the graded non-pluralism developed in Sections~\ref{section6} and~\ref{section7}. 

\subsection{Characterising Radical Arithmetical Pluralism}

Radical arithmetical pluralism, though not explicitly defended under 
this name in the literature, serves as a useful foil. Koellner [2009] 
attributes a form of it to Carnap's Principle of Tolerance; Azzouni 
[2023] presupposes a stance close to it without systematic 
articulation.

We reconstruct the position as follows. The core epistemic thesis 
(minimal radical arithmetical pluralism): any consistent arithmetical 
theory $T$, formulated in a language definitionally equivalent to \(\mathsf{PA}\), counts as legitimate---i.e., as a genuine possible object of mathematical study. This core does not require egalitarianism: it allows that some consistent theories may be preferred over others on substantive grounds (e.g., explanatory power, fruitfulness), so long as the rejected theories remain coherent frameworks.  The thesis rests on denying the standard model any metaphysical privilege, thereby allowing non-standard models equal metaphysical standing. The minimal thesis is often extended into stronger claims (e.g., semantic relativism or egalitarianism), but our critique targets only the minimal core, as it is the weakest and hence most defensible version.

Radical pluralism is logically stronger than both intra-framework and inter-framework pluralism. Intra-framework pluralism concerns equal justification within a fixed language; radical pluralism claims legitimacy for any consistent theory, regardless of justification. Inter-framework pluralism concerns truth across languages but restricts the label ``arithmetical'' to theories preserving a core conceptual role; radical pluralism imposes no such constraint---mere consistency suffices. Showing that consistency-based criteria are inadequate for theory choice does not refute intra- or inter-framework pluralism, but it reveals that consistency alone is too weak a ground for legitimacy, motivating the graded criteria we develop later.

\subsection{Meta-Theoretic Assumptions for Independence}

Radical pluralists must establish that both $\mathsf{PA}+A$ and 
$\mathsf{PA}+\neg A$ are consistent for the relevant independent 
sentences $A$. This requires meta-theoretic assumptions often 
overlooked in philosophical discussion. We examine two broad categories of \(\mathsf{PA}\)-independent sentences: consistency statements (derived via 
arithmetization and provability predicates) and concrete arithmetical 
sentences like the Paris-Harrington sentence \(\mathsf{PH}\).

\subsubsection{The Case of $\mathsf{Con}(\mathsf{PA})$}

We adopt \(\mathsf{Con}(\mathsf{PA}) \triangleq \neg \mathsf{Pr}(\ulcorner 0=1 \urcorner)\) 
as the standard formalisation, based on a standard G\"odel coding and 
the usual provability predicate $\mathrm{Pr}(x)$ [H\'ajek and Pudl\'ak, 1993].\footnote{For detailed discussion of the intensionality of consistency statements, see Cheng [2021].}
          
Picollo and Waxman  [2025] claim that ``if \(\mathsf{PA}\) is consistent, both \(\mathsf{PA}+\mathsf{Con(PA)}\) and \(\mathsf{PA}+\neg\mathsf{Con(PA)}\) are consistent too'' (p.~9). This is incorrect. Gödel's second incompleteness theorem ($\mathsf{G2}$) tells us that if {\sf PA} is consistent, then \(\mathsf{PA}+\neg\mathsf{Con(PA)}\) is consistent.  However, 
assuming only $\mathsf{PA}$'s consistency does not allow us to conclude that 
$\neg \sf Con(PA)$ is also  unprovable in $\mathsf{PA}$.\footnote{In fact, \(\mathsf{PA}+\mathsf{Con(PA)}\nvdash \neg\mathsf{Pr}_{\mathsf{PA}}(\neg\mathsf{Con(PA)})\); otherwise we would get \(\mathsf{PA}+\mathsf{Con(PA)}\vdash \mathsf{Con}(\mathsf{PA}+\mathsf{Con(PA)})\), contradicting \(\mathsf{G2}\).}
 
A theory is $n$-consistent ($n\ge 1$) if there is no $\Pi_{n-1}^0$ 
formula $\phi(x)$ such that the theory proves $\exists x\phi(x)$ and 
also proves $\neg\phi(\bar{n})$ for every $n\in\omega$. An 
$\omega$-consistent theory satisfies this for all arithmetical 
formulas. Let \(1\)-\(\mathsf{Con(PA)}\) express that \(\mathsf{PA}\) is \(1\)-consistent. We can show that \(\mathsf{PA}+1\)-\(\mathsf{Con(PA)}\vdash \neg\mathsf{Pr}_{\mathsf{PA}}(\neg\mathsf{Con(PA)})\). Hence, proving the independence of \(\mathsf{Con(PA)}\) requires \(1\)-\(\mathsf{Con(PA)}\).

This generalises. For any consistent recursively enumerable extension 
$T$ of \(\mathsf{PA}\), Pudl\'ak [1999] established: (1) $T\vdash {\sf Con}(T)\rightarrow {\sf Con}(T+\neg {\sf Con}(T))$; (2) $T\nvdash {\sf Con}(T)\rightarrow {\sf Con}(T+{\sf Con}(T))$;  (3)   $T\vdash 1$-${\sf Con}(T)\rightarrow {\sf Con}(T+ {\sf Con}(T))$. These results underscore the crucial difference between 
\(\mathsf{Con(PA)}\) and \(1\)-\(\mathsf{Con(PA)}\)---a difference often 
overlooked in informal discussion. 
In summary, showing that both \(\mathsf{PA}+\mathsf{Con(PA)}\) and \(\mathsf{PA}+\neg\mathsf{Con(PA)}\) are consistent requires a meta-theoretical assumption strictly stronger than the mere consistency of \(\mathsf{PA}\)—namely, \(1\)-\(\mathsf{Con(PA)}\). 

\subsubsection{Concrete Independent Sentences}
G\"odel's original independent sentence is purely metamathematical and 
lacks the kind of natural or concrete mathematical content that 
mathematicians care about. This prompted the research programme of 
concrete incompleteness.\footnote{H.Friedman [2011, 2025] studies concrete incompleteness across systems ranging from ${\sf PA}$ to second-order arithmetic, $\mathsf{ZFC}$, and $\mathsf{ZFC}$ with large cardinals, listing many such sentences.} Numerous concrete independent sentences have 
been discovered: the Kanamori-McAloon principle, Goodstein sequences, 
the Hercules-Hydra game, and others.\footnote{For definitions, see Cheng [2019].}  A remarkable finding is that many 
such sentences are provably equivalent over \(\mathsf{PA}\) to \(1\)-\(\mathsf{Con(PA)}\). 

The situation is more demanding for concrete arithmetical sentences 
such as \(\mathsf{PH}\). While \(\mathsf{PA}\)'s consistency suffices to show that \(\mathsf{PH}\) is 
unprovable in \(\mathsf{PA}\), it does not suffice to show that \(\neg\mathsf{PH}\) is 
unprovable. In fact, as shown in the Appendix:
\begin{enumerate}[(1)]
\item \(\mathsf{PA}+\mathsf{Con(PA)}\nvdash \neg\mathsf{Pr}_{\mathsf{PA}}(\neg\mathsf{PH})\);
\item \(\mathsf{PA}+1\text{-}\mathsf{Con(PA)}\nvdash \neg\mathsf{Pr}_{\mathsf{PA}}(\neg\mathsf{PH})\);
\item \(\mathsf{PA}+2\text{-}\mathsf{Con(PA)}\vdash \neg\mathsf{Pr}_{\mathsf{PA}}(\neg\mathsf{PH})\).
\end{enumerate}
Thus, proving \(\mathsf{PH}\) independent of \(\mathsf{PA}\) requires \(2\)-consistency. More 
generally, if $A$ is equivalent over \(\mathsf{PA}\) to the $n$-consistency of \(\mathsf{PA}\), 
then proving $A$ independent requires assuming $(n+1)$-consistency. To 
establish independence for a broad class of such sentences, one must 
assume $\omega$-consistency (\(\omega\)-\(\mathsf{Con(PA)}\)).
  
This observation bears directly on radical pluralism: the epistemic burden of establishing consistency for both extensions escalates with the strength of the independent sentence.

\subsection{Evaluating the Self-Defeat Objection}

The meta-theoretic requirements above generate a serious objection to radical pluralism---the self-defeat objection. We examine the objection and show why it fails, while also showing why its failure reveals the need for a more discriminating account.

\subsubsection{The Objection Stated}
The objection can be formulated as a five-step \emph{reductio}:
\begin{enumerate}[(1)]
\item If any consistent arithmetical theory is legitimate, then for any \(\mathsf{PA}\)-independent \(A\), both \(\mathsf{PA}+A\) and \(\mathsf{PA}+\neg A\) are legitimate.  
\item For certain \(A\) (e.g., \(\mathsf{Con(PA)}\)), establishing the consistency of both requires a meta-theory \(T\) that proves \(A\).
\item Since \(T\vdash A\), \(\mathsf{PA}+\neg A\) is incompatible with \(T\). 
\item Legitimacy requires compatibility with the meta-theory used to establish consistency.
\item Hence, \(\mathsf{PA}+\neg A\) is not legitimate, contradicting (1).  
\end{enumerate}

For the mathematical realist, who takes the standard model as the 
arbiter of truth, the objection appears compelling: since 
\(\omega\)-\(\mathsf{Con(PA)}\) is true in the standard model and entails $A$, only 
$\mathsf{PA}+A$ is legitimate. We evaluate the objection strictly on 
its own terms, remaining neutral on the realist stance.
   
\subsubsection{Why the Objection Fails Against the Radical Pluralist}

The radical pluralist can resist the objection. Three responses are available; the first two are defensive, the third decisive.

\emph{First response: burden of proof and regress.} The objection's core is a burden-of-proof challenge: to regard $\mathsf{PA}+\mathsf{Con}(\mathsf{PA})$ as legitimate, must we not presuppose a meta-system proving \(\mathsf{Con}(\mathsf{PA})\), and what warrants that presupposition? This proves too much: applied generally, it generates a vicious regress: any meta-system $T_1$ certifying $T_0$'s 
consistency would require a further $T_2$ to certify its own 
legitimacy, and so on. No finite agent or recursively axiomatised system can satisfy such an absolute demand.
 
\emph{Second response: model-theoretic parity.} Even if \(T\) proves \(A\), \(\mathsf{PA} + \neg A\) remains consistent relative to \(T\)'s assumptions and hence has a model---albeit non-standard. For the radical pluralist, this suffices for legitimacy. The set-theoretic analogy is instructive: \(\mathsf{ZFC} + \mathsf{CH}\) and \(\mathsf{ZFC} + \neg \mathsf{CH}\) are both consistent relative to \(\mathsf{ZFC}\), and set theorists treat both as legitimate frameworks.

\emph{Third response: the conditional strategy (decisive).} The radical pluralist need not \emph{prove} consistency. The claim is conditional: \emph{if} a theory is consistent, then it is legitimate. This bypasses the regress entirely, as it does not require establishing any positive epistemic fact. The objector asks: by what right do you presuppose \(T\)? The pluralist answers: I do not presuppose \(T\)'s correctness; I offer a conditional criterion. If the opponent wishes to deny legitimacy to \(\mathsf{PA} + \neg A\), they must show inconsistency---not that I have failed to prove consistency.   The burden shifts to the opponent. This conditional strategy rejects any framework-transcendent notion of ``absolute correctness.'' The objection assumes the meta-theory used to prove consistency is correct in a way that constrains our assessment of object theories. The pluralist denies any such external standard. 
The conditional criterion is neutral among competing meta-theories: it applies equally from within $\mathsf{PA}+\mathsf{Con}(\mathsf{PA})$ and $\mathsf{PA}+\neg\mathsf{Con}(\mathsf{PA})$ (assuming each is consistent). Within \(T\), $\mathsf{PA}+\neg A$ is incompatible with \(T\)'s proof of \(A\), but this only shows they cannot be coherently combined---not that $\mathsf{PA}+\neg A$ is absolutely illegitimate. Since the radical pluralist recognises no framework-transcendent notion of a ``correct'' meta-theory, the objection has no traction.

\subsubsection{Conclusion of the Evaluation}

The self-defeat objection does not refute radical pluralism. It shows 
only that radical pluralism is incompatible with the realist's 
commitment to a unique, privileged meta-theoretic standpoint, a cost 
the radical pluralist is prepared to bear. However, as we now argue, 
the coherence of radical pluralism comes at a price too high for those 
seeking to understand rational theory choice in mathematical practice.

\subsection{The Inadequacy of Consistency-Based Pluralism and the Road Ahead}
 
While radical pluralism survives the self-defeat objection, our 
analysis reveals a deeper problem: consistency is too weak a criterion 
for legitimacy. The radical pluralist's position is coherent but 
profoundly permissive. A theory like $\mathsf{PA}+\neg\mathsf{Con}(\mathsf{PA})$ 
is consistent (if \(\mathsf{PA}\) is), but it asserts its own inconsistency and fails to cohere with the well-ordered hierarchy of natural mathematical theories.

This permissiveness is not a bug from the radical pluralist's perspective---it is the feature. But for those who seek to understand how theoretical reason guides theory choice in actual mathematical practice, mere consistency is insufficient. We need criteria that distinguish extensions that are justified from those merely consistent. The radical pluralist's minimalism, however coherent, offers no resources for such distinctions, rendering theory choice almost idle.
   
This inadequacy---not logical inconsistency---is precisely what motivates the graded framework we develop in the sections that follow. The radical pluralist's rejection of the standard model's authority, while coherent, severs the connection between arithmetical truth and the intended interpretation that grounds mathematical practice. This is not a logical refutation but a substantial philosophical price. The graded framework we develop in Sections~\ref{section6} and ~\ref{section7} preserves this connection while acknowledging the variable strength of theoretical reasons.
  
\emph{The transition to the selection problem.} The failure of consistency-based criteria to supply adequate grounds for theory choice brings us directly to the selection problem (Section~\ref{section6}): when do theoretical reasons supply decisive grounds for choosing between \(\mathsf{PA} + \phi\) and \(\mathsf{PA} + \neg \phi\)? The radical pluralist's reliance on consistency alone is insufficient; what is needed are substantive criteria---interpretability strength, hierarchical coherence, unifying power, and philosophical justification. These are the criteria we deploy in Section~\ref{section6} to defend \(\mathsf{PA} + \mathsf{Con}(\mathsf{PA})\) and in Section~\ref{section7} to articulate our graded epistemic non-pluralism. The radical pluralist's failure to provide such criteria does not refute radical pluralism, but it demonstrates its theoretical poverty: a view that cannot distinguish between theoretically fruitful and pathological extensions offers no guidance for mathematical practice. The graded framework we now develop fills precisely this gap.

In sum, while radical pluralism is not self-defeating, its coherence comes at the cost of explanatory impotence. By treating consistency as sufficient for legitimacy, it fails to distinguish well-supported extensions from theoretically deviant ones. Since mathematical practice is guided by distinctions of strength, coherence, and fruitfulness, the minimalist criterion is inadequate as a guide to rational theory choice. This inadequacy---not logical inconsistency---motivates the need for the graded framework we develop in the sections that follow. 

\section{The Selection Problem for {\sf PA}-Independent Sentences}\label{section6}
We take the selection problem to be the question of when theoretical reasons supply decisive grounds for choosing between \(\mathsf{PA}+\phi\) and \(\mathsf{PA}+\neg\phi\), for \(\phi\) independent of \(\mathsf{PA}\) [Koellner, 2013, p.~7].  Such grounds obtain when \(\phi\) is decided by an accepted, well-justified background framework; where they do not, we may legitimately remain agnostic. The selection problem is not a universal imperative but a localised heuristic for identifying where evidence runs out. Crucially, failure to resolve it for a given \(\phi\) does not entail pluralism—it entails only epistemic agnosticism, which is compatible with non-pluralism so long as a principled resolution exists in principle.
 
Following Gödel's program, the non-pluralist holds that where mathematical evidence—e.g., large cardinal axioms—delivers a verdict, pluralism is defeated. Where such evidence runs out, as it arguably does at the level of \(\Pi^1_2\) sentences like $\sf CH$, the non-pluralist may suspend judgment without conceding pluralism. Hence, the selection problem is not a demand for a universal decision procedure, but a diagnostic tool for assessing the boundaries of decidability by our best set-theoretic practice.

As noted in the introduction, the viability of epistemic intra-framework arithmetical pluralism—the view that incompatible extensions of {\sf PA} are equally justified within a single language—depends on whether the selection problem can be resolved in favour of one extension over the other. The present discussion therefore falls squarely within the epistemic intra-framework dimension of our taxonomy. We set aside semantic inter-framework pluralism and ontological pluralism, as they do not directly engage the theory-choice criteria that Koellner and others deploy here.

To challenge epistemic arithmetical pluralism, one must show that for some {\sf PA}-independent \(A\), the theories \(\mathsf{PA}+A\) and \(\mathsf{PA}+\neg A\) are not equally legitimate—i.e., that one is theoretically preferable. Whether epistemic arithmetical pluralism is tenable thus turns on whether the selection problem for ${\sf PA}$-independent sentences can be shown to have principled solutions in general. Where no such solution is currently at hand, a non-pluralist may reasonably adopt agnosticism rather than commitment.

This section proceeds as follows. In Section \ref{theory selection} we outline Koellner's general view on theory selection; in Section \ref{sec for Con} we address the selection problem for ${\sf Con(PA)}$; and in Section \ref{selection for concrete} we examine the selection problem for concrete independent sentences of \(\mathsf{PA}\).

\subsection{Koellner's View on Theory Selection}\label{theory selection}

Koellner [2009, 2013] maintains that, within the epistemic intra-framework debate—where theory choice turns on comparing the justificatory credentials of rival extensions of a fixed formal theory—selection demands substantive theoretical reasons—such as intrinsic plausibility, explanatory power, and inter-theoretic connections—rather than mere convention or practical experience [p. 91]. This coheres with broader philosophical accounts of theory choice, which often privilege criteria such as consistency, simplicity, logical strength, and unifying power (e.g., Mizrahi [2022]; Incurvati and Nicolai [2024]). For Koellner, theory selection is a rigorous, cumulative enterprise, propelled by the interplay of philosophical reflection and mathematical discovery.

In addressing the selection problem for second-order arithmetic and set theory, Koellner [2009] identifies several core principles:
\begin{itemize}
  \item Mathematical Grounding: Choices should be rooted in mathematical evidence, not philosophical preference.
  \item Theoretical Justification: Selection must be driven by theoretical reasons, not pragmatic convenience.
  \item Evidential Support: A theory should explain core mathematical facts (``primary data") and successfully predict or organize further consequences (``secondary data").
\item Mathematical Fruitfulness: Preference goes to theories whose axioms are justified by their fruitful consequences.\footnote{Koellner [2009, 2013] argues that extrinsic justification based on fruitfulness and inter-theoretic connections can be as legitimate as intrinsic justification, particularly beyond the domain of arithmetical intuition.}
\item Hierarchical Coherence: An adequate theory must cohere within a structured hierarchy of ``natural" mathematical theories. 
\end{itemize}

We will show that these principles are equally applicable to the selection problem for ${\sf PA}$.

\subsection{The Selection Problem for $\mathsf{Con(PA)}$}\label{sec for Con}

We now turn to the selection problem for \(\mathsf{Con(PA)}\) within the epistemic intra-framework setting. We offer a multidimensional justification for preferring \(\mathsf{PA}+\mathsf{Con(PA)}\) over \(\mathsf{PA}+\neg\mathsf{Con(PA)}\), appealing to criteria of logical coherence, interpretability strength, hierarchical coherence, unifying power, and philosophical justification. Together, these considerations supply decisive theoretical reasons for the choice.

\subsubsection{Logical Coherence}

First, while both $\mathsf{PA} + \mathsf{Con(PA)}$ and $\mathsf{PA} + \neg \mathsf{Con(PA)}$ are externally consistent relative to a strong meta-theory like $\sf ZFC$, the latter proves a formalization of its own inconsistency ($\mathsf{PA}+\neg \mathsf{Con(PA)} \vdash \neg \mathsf{Con}(\mathsf{PA}+\neg \mathsf{Con(PA)})$). This internal instability undermines its credibility under the usual conception of consistency.

Second, $\mathsf{PA} + \neg \mathsf{Con(PA)}$ is $\Sigma^0_1$-unsound, as it proves the false $\Sigma^0_1$ sentence $\neg \mathsf{Con(PA)}$, thereby undermining its capacity to capture basic arithmetic truths. This attribution of falsity is conditional on the standard interpretation. For the radical pluralist who denies the determinacy of that interpretation, the term 'false' lacks a fixed reference. Our argument is addressed to the non-pluralist who accepts the standard model as a determinate notion. In contrast, $\mathsf{PA} + \mathsf{Con(PA)}$ preserves and extends the soundness of $\mathsf{PA}$; it does not prove any false $\Sigma^0_1$ sentences and incorporates $\mathsf{Con(PA)}$, which is true in the standard model.

We emphasize that the attributions of `unsoundness' and `falsehood' here are employed strictly from the perspective of the intended standard interpretation—the standpoint of the mathematical realist and the intra-framework non-pluralist. For the radical pluralist who rejects that interpretation, this criticism is merely external.

\subsubsection{Logical Strength via Interpretability}

A central measure of logical strength is interpretability.\footnote{For a defense of understanding logical strength via interpretability, see Incurvati \& Nicolai [2024].} According to Koellner's classification, $\mathsf{Con(PA)}$ exemplifies a ``single jump": only one of $\mathsf{PA}+A$ or $\mathsf{PA}+\neg A$ increases interpretability strength [2010].\footnote{Koellner [2010] classifies independent sentences $A$ of {\sf PA} into three cases based on whether ${\sf PA}+ A$ and/or ${\sf PA}+ \neg A$ increase interpretability strength. ${\sf Con(PA)}$ is a case of ``single jump" [Koellner, 2010].}

We compare the two theories:
\begin{itemize}
  \item Genuine Strength: $\mathsf{PA} + \mathsf{Con(PA)}$ is not interpretable in $\mathsf{PA}$ and represents a genuine increase in interpretability strength ([Feferman, 1960]; [Pudl\'{a}k, 1985]).\footnote{According to Feferman [1960], ${\sf PA}+ \sf Con(PA)$ is not interpretable in {\sf PA}. Generally, Pudl\'{a}k [1985] shows that for any consistent recursively enumerable theory $S$, the theory ${\sf Q} + \sf Con(S)$ is not interpretable in $S$.} Thus, $\mathsf{PA} < \mathsf{PA} + \mathsf{Con(PA)}$ in the interpretability order.
  \item No Essential Gain: By the arithmetized completeness theorem, $\mathsf{PA} + \neg \mathsf{Con(PA)}$ is mutually interpretable with $\mathsf{PA}$ [Feferman, 1960], offering no essential gain in strength.
\end{itemize}

This aligns with Steel's Maxim—the principle that, other things being equal, theories with greater interpretability strength are to be preferred—thereby favoring $\mathsf{PA} + \mathsf{Con(PA)}$.

\subsubsection{Hierarchical Coherence with Natural Theories}

We show that $\mathsf{PA} + \mathsf{Con(PA)}$  coheres with the hierarchy of natural theories with respect to consistency strength and reflection principles.\footnote{The term ``naturalness" in mathematics lacks a precise, universally accepted definition in the literature.}

First, as Walsh [2025] documents, the theories that arise in mainstream mathematical practice are pre-well-ordered by consistency strength---an empirical phenomenon, not a formal theorem.\footnote{That is, natural theories are typically comparable by consistency strength, and there are no infinite descending sequences of such theories of decreasing strength.} $\mathsf{PA} + \mathsf{Con(PA)}$  fits seamlessly into this hierarchy, whereas $\sf PA + \neg Con(PA)$ is pathological, violating the order and creating ill-founded sequences.\footnote{The full space of axiomatic theories ordered by consistency strength is non-linear and ill-founded. Theories like $\sf PA + \neg Con(PA)$, created via self-reference, are classic examples of such pathological elements [Walsh, 2025].}
 
Second, natural theories are often generated by iterating reflection principles. The $\Pi^0_1$-uniform reflection principle for $\mathsf{PA}$, denoted $\sf RFN_{\Pi^0_1}(\mathsf{PA})$, is classically equivalent to $\mathsf{Con}(\mathsf{PA})$ over a weak base theory.\footnote{$\sf RFN_{\Pi^0_1}(\mathsf{PA})$ denotes the sentence $\forall \varphi\in \Pi^0_1 (\sf Pr_{\mathsf{PA}}(\varphi)\rightarrow \sf True_{\Pi^0_1}(\varphi))$, where $\sf True_{\Pi^0_1}(\varphi)$ is the truth definition for $\Pi^0_1$ formulas. Note that $\mathsf{Con}(\mathsf{PA})$ is strictly weaker than the $\Sigma^0_1$-uniform reflection schema $\mathsf{RFN}_{\Sigma^0_1}(\mathsf{PA})$, which is equivalent to $1$-consistency.} This equivalence underscores why $\mathsf{Con}(\mathsf{PA})$ is viewed as a minimal soundness principle: it asserts the truth of all $\Pi^0_1$ consequences of $\mathsf{PA}$. Consequently, $\mathsf{PA} + \mathsf{Con(PA)}$ arises naturally as the canonical first step above $\mathsf{PA}$ in the Turing-Feferman progression of reflection principles, with no stable natural theory of intermediate strength [Walsh, 2025]. Choosing its negation blocks progress along this natural, well-founded path.

\subsubsection{Unifying Power}

Turing's completeness theorem shows that any true $\Pi^0_1$ arithmetical sentence is provable in some iteration of $\mathsf{PA}$ via the consistency statement.\footnote{Feferman [1962] extends this result, showing that any true arithmetical sentence is a consequence of a suitable transfinite iteration of full uniform reflection of {\sf PA}.}   This demonstrates the unifying power of the consistency statement. Although intensional (relying on ordinal notations), this theorem highlights the central epistemic role of $\mathsf{Con(PA)}$: its iterations, under a ``natural" notation system, can ascertain the truth of all $\Pi^0_1$ arithmetical sentences.

\subsubsection{Philosophical Justification}

First, Horsten [2021] argues for a reflective justification: the process of accepting a theory involves reflective reasoning that naturally leads to endorsing its consistency. On this view, accepting $\mathsf{PA}$ rationally commits one to accept $\mathsf{Con(PA)}$.

Second, the Implicit Commitment Thesis ({\sf ICT})—the view that accepting a theory $S$ carries an implicit commitment to $\sf Con(S)$—would justify $\mathsf{PA} + \mathsf{Con(PA)}$ ([Nicolai and Piazza, 2019]; [Horsten, 2021]; [Brauer, 2023]). While {\sf ICT} remains contested, its plausibility for arithmetical theories supports our preference.
 
In summary, the collective force of these criteria provides a robust, multi-dimensional justification for choosing $\mathsf{PA} + \mathsf{Con(PA)}$. It is a natural extension arising from iterative reflection, preserves the well-ordered hierarchy of ``natural" theories, and aligns with the goal of building sound, fruitful systems. In contrast, $\mathsf{PA} + \neg \mathsf{Con(PA)}$ is a self-undermining, artificial construct that introduces unsoundness and disrupts this coherent hierarchy.

We note that similar criteria (logical coherence, interpretability strength, etc.) are applicable to the selection problem for theories stronger than $\sf PA$, such as $\sf ZFC$, though exploring this lies beyond the scope of the present paper.

\subsection{The Selection Problem for Concrete $\sf PA$-Independent Sentences}\label{selection for concrete}

We now extend the selection problem from metamathematical consistency statements to concrete independent arithmetical sentences. For a concrete \(\mathsf{PA}\)-independent sentence \(A\), the question is whether we have decisive grounds for choosing \(\mathsf{PA}+A\) over \(\mathsf{PA}+\neg A\).
 
In his discussion of arithmetical pluralism, Koellner [2009] focuses primarily on independent sentences derived from meta-mathematical methods. He notes: There are very few statements of prior mathematical interest that are known to be independent of ${\sf PA}$. The classic example of such a statement is the Paris-Harrington sentence. Still, there are very few such statements and for this reason people are inclined to regard ${\sf PA}$ as effectively complete [Koellner, 2009, pp. 102-103].

However, Koellner's analysis overlooks the extensive literature on concrete incompleteness. In fact, many arithmetical sentences of genuine mathematical interest are known to be independent of ${\sf PA}$.

Koellner appears to assume that all independent arithmetical sentences of ${\sf PA}$ are provable in {\sf ZFC}. This presupposition may lead him to regard non-pluralism in first-order arithmetic as straightforward, while viewing non-pluralism in second-order arithmetic as more challenging, since resolving the latter requires justifying large cardinals [Koellner, 2009, 2013]. We will show, however, that many arithmetical sentences are not provable in {\sf ZFC}, and that even for $\Pi^0_1$ sentences, the selection problem may require the justification of large cardinals. In what follows, we distinguish between small, medium, and very large cardinals based on their consistency strength and compatibility with $\mathsf{V} = \mathsf{L}$ (see [Kanamori, 2005]; [Koellner, 2010]).\footnote{Small large cardinals (e.g., Inaccessible, Mahlo, Weakly compact, Indescribable, Subtle, Ineffable) are compatible with {\sf L}. Medium large cardinals (e.g., Measurable, Strong, Woodin) imply $\sf V\neq L$ and are weaker than Supercompact cardinals in consistency strength. Very large cardinals (e.g., Supercompact, Huge) are at least as strong as Supercompact cardinals in consistency strength. For  definitions of these large cardinals, we refer to Kanamori [2005] and Koellner [2010].}

Let $A$ be a concrete independent arithmetical sentence of {\sf PA}. Depending on its proof-theoretic strength, we distinguish three cases:
\begin{itemize}
 \item Case One: The  sentence $A$ is not provable in $\mathsf{PA}$ but is provable in some fragment of second-order arithmetic (e.g., its strength lies between $\mathsf{PA}$ and full second-order arithmetic).\footnote{For instance, the finite version of Kruskal's Theorem is not provable in $\mathsf{ATR}_0$, and the Finite Graph Minor Theorem is not provable in ${\sf \Pi^1_1}$-${\sf CA_0}$, yet both are provable in second-order arithmetic [H. Friedman, 2011].}
\item Case Two: The  sentence $A$ is provable in some fragment of $\mathsf{ZFC}$ but not provable in second-order arithmetic (e.g., its strength lies between second-order arithmetic and $\mathsf{ZFC}$).\footnote{H. Friedman [2011] provides a number of concrete mathematical statements provable in third-order arithmetic but independent of second order arithmetic: for example, Theorem 0.11C.7 in p. 147, Theorem 0.11D.1 in p. 148, Theorem 0.11D.2 and Theorem 0.11D.4 in p. 149,  and Theorem 0.11E.2 in p. 151 [H. Friedman, 2011]. Cheng [2019] gives an example of a concrete mathematical theorem based on Harrington's principle which is isolated from the proof of Harrington's Theorem (the determinacy of $\Sigma^1_1$ games implies the existence of zero sharp), and show that the theorem ``Harrington's principle implies the existence of zero sharp" is  expressible in second order arithmetic, neither provable in  second order arithmetic or third order arithmetic, but provable in fourth order arithmetic (i.e., the minimal system in higher-order arithmetic to prove this concrete theorem is fourth order arithmetic).} 
\item Case three: The  sentence $A$ is not provable in $\mathsf{ZFC}$ but is provable in $\mathsf{ZFC}$ extended by certain large cardinal axioms (e.g., its strength lies between $\mathsf{ZFC}$ and $\mathsf{ZFC}$ plus some large cardinals).
\end{itemize}

Most known concrete independent arithmetical sentences of \(\mathsf{PA}\) are provable in second-order arithmetic or \(\mathsf{ZFC}\), falling under Case One or Two (see Friedman [2011]). For such sentences, accepting the relevant background theory readily resolves the selection problem: we choose \(\mathsf{PA}+A\), since \(A\) is provable (and hence true in the standard interpretation) in that system, whereas \(\mathsf{PA}+\neg A\) is unsound.
 
Given the widespread acceptance of $\mathsf{ZFC}$ as a foundation for classical mathematics, the selection problem for sentences in Cases One and Two is decisively settled for those who endorse {\sf ZFC}: only the {\sf ZFC}-provable extension of $\mathsf{PA}$ is legitimate. If all independent sentences of {\sf PA} were $\mathsf{ZFC}$-provable, then non-pluralism for first-order arithmetic would be trivial. This raises a key question: Are all concrete (non-metamathematical) arithmetical sentences independent of $\mathsf{PA}$ provable in $\mathsf{ZFC}$?\footnote{Here, we only consider concrete arithmetical sentences. If considering metamathematical arithmetical sentences, the answer is obvious since ${\sf Con(ZFC)}$ is a metamathematical arithmetical sentence unprovable in $\mathsf{ZFC}$.} The answer is no.

H. Friedman [1998] examines how large cardinals can be used in an essential and natural way in number theory. He wrote: 
\begin{quote} 
\small
The quest for a simple meaningful finite mathematical theorem that can only be proved by going beyond the usual axioms for mathematics has been a
goal in the foundations of mathematics since G\"{o}del's incompleteness theorems [H. Friedman,  1998, p. 805].
\end{quote}
                                
Friedman [1998, 2011, 2025] has discovered numerous natural combinatorial arithmetical sentences that are independent of $\mathsf{PA}$ and unprovable in $\mathsf{ZFC}$.\footnote{Of course, ``naturalness" or ``concreteness" is itself a vague notion that evolves with mathematical practice. Whether the independent statements discovered by Harvey Friedman truly belong to ``mainstream mathematical practice" remains controversial. We would like to thank a referee for pointing out that to find formulas with recognizably mainstream mathematical content is not merely to identify those that mainstream mathematicians acknowledge as dealing with or drawing on mainstream concepts; it is to find formulas they genuinely want to use.} Their strength reaches the level of large cardinals, in the sense that proving them necessitates the use of certain  large cardinals, as these sentences imply the consistency of $\mathsf{ZFC}$ augmented with specific large cardinal axioms.

For example, in the proof of Proposition B and Proposition D in H. Friedman [1998, pp. 808-809], the use of subtle cardinals is necessary, as Proposition B and Proposition D imply the consistency of ``${\sf ZFC} +$ there exists a subtle cardinal", and any extension of {\sf ZFC} that suffices to prove these arithmetical sentences is an extension of {\sf ZFC} in which ``{\sf ZFC} + there exists a subtle cardinal" is interpretable [H. Friedman, 1998, Theorem 5.91, Corollary 1, p. 892].   H. Friedman [2025] shows that some independent arithmetical sentences of {\sf PA} have strength at the level of huge cardinals; proving them necessitates the use of huge cardinals in the sense that these sentences imply the consistency of ``{\sf ZFC} + there exists a huge cardinal".\footnote{H. Friedman's research program on Boolean Relation Theory (BRT) and invariant maximality has produced a rich menagerie of concrete, finitary, combinatorial statements (often $\Pi^0_1$ or $\Pi^0_2$) that capture the consistency strength of many large cardinals. See H. Friedman [2011, 2025].}  H. Friedman's work demonstrates that the hierarchy of large cardinal axioms is not merely a set-theoretic abstraction but is intrinsically woven into the fabric of finite combinatorial mathematics.

It remained an open question whether we can find a concrete independent $\Pi^0_1$ sentence of {\sf PA} with mathematical content. H. Friedman [2025a] announces a positive answer to this question. Many independent arithmetical sentences of $\mathsf{PA}$ in H. Friedman [2011, 2025] are implicitly $\Pi^0_1$, meaning that they are provably equivalent to $\Pi^0_1$ sentences even if not explicitly  $\Pi^0_1$. H. Friedman [2025a] discovers  an explicitly $\Pi^0_1$ sentence whose strength lies at the level of some large cardinals. This explicitly $\Pi^0_1$ statement is provable in $\sf EFA + \sf Con(SRP)$, and it implies $\sf Con(SRP)$ over $\sf PRA$ [H. Friedman, 2025a].\footnote{{\sf EFA} stands for Exponential Function Arithmetic, which is based on $0$, successor, addition, multiplication, exponentiation and bounded induction; $\sf PRA$ stands for Primitive Recursive Arithmetic; $\sf SRP$   stands for ``stationary Ramsey property", defined as  ${\sf ZFC} + (\exists\lambda)(\lambda$ has the $k$-$\sf SRP$) as a scheme in $k\in\omega$, where $k$-$\sf SRP$ means $k$-stationary Ramsey property which is a large cardinal property compatible with {\sf L} [H. Friedman, 2025b, p. 128]. For the definition of ``$k$-stationary Ramsey property", we refer to H. Friedman [2025b, Definition A.5, p. 127].}  This discovery is remarkable: it shows that even among $\Pi^0_1$ sentences, there are  independent sentences of {\sf PA} that fall under Case Three. Consequently, defending non-pluralism for $\Pi^0_1$ sentences may itself depend on justifying large cardinals.

Resolving the selection problem for Case Three sentences is therefore more challenging than for Cases One or Two. Moreover, the difficulty escalates with the strength of the required large cardinals: sentences requiring very large cardinals pose a deeper justificatory challenge than those requiring only small large cardinals.

Although $\mathsf{ZFC}$ is widely accepted, the status of large cardinal axioms remains contentious. We contend that the very need to invoke them to decide certain arithmetical truths challenges the purported self-evidence and clarity of our conception of the natural numbers: if this conception fails to settle these sentences, compelling us to appeal to disputed set-theoretic principles, then its foundational role is ipso facto diminished. Moreover, if the ``standard model" is characterized solely by arithmetical truth, then appealing to it to decide an independent sentence is viciously circular—it presupposes precisely the determinacy of arithmetical truth that the pluralist calls into question. Given that Friedman's Case Three sentences require large cardinals for their proof, the alleged determinateness of the standard model cannot furnish an immediate verdict on them. Hence, the non-pluralist who relies on the standard model must inevitably ascend the set-theoretic hierarchy to justify its properties, thereby undermining any claim that first-order arithmetic is settled by conceptual clarity (or determinateness) alone. 
                      
As Feferman et al. [2000] and others have questioned, whether Friedman's independent statements---although they involve concepts of finite combinatorics---have truly been integrated into the day-to-day research of mainstream mathematicians remains an open question.\footnote{Feferman et al. [2000]  questions the ``naturalness" of some of H. Friedman's  examples, arguing that they are not ``natural" in the sense that they are crafted for logical investigation rather than arising from mainstream mathematical  practice.} Nevertheless, this does not diminish their philosophical significance: they show that, even within the language of arithmetic, the proof strength of mathematical statements can reach the level of large cardinals, thereby in principle linking the selection problem for arithmetic to the justification of set-theoretic axioms.

Gödel's program—the search for new well-justified axioms (particularly large cardinal axioms) to settle statements independent of {\sf ZFC}—becomes directly relevant here. Through H. Friedman's work, the selection problem for Case Three arithmetical sentences is linked to Gödel's program and the justification of large cardinals. Thus, Koellner's [2009] defense of non-pluralism in first order arithmetic remains incomplete. Resolving the selection problem for Case Three sentences is more demanding than for the other cases, requiring us to justify the acceptance of certain large cardinal axioms. 
               
\section{Graded Epistemic Arithmetical Non-Pluralism}\label{section7}
         
We now articulate our positive thesis---graded epistemic arithmetical 
non-pluralism. Like the view it addresses, it belongs squarely to the 
epistemic intra-framework dimension: it concerns the justificatory 
status of competing {\sf PA}-extensions within a fixed language, and takes no stand on semantic inter-framework or ontological pluralism.
 
The thesis is conditional and fallibilistic: if one accepts the 
prevailing standards of mathematical evidence---interpretability 
strength, hierarchical coherence, and extrinsic success---then the 
graded spectrum follows. Its normative force is not a priori but 
derives from a critical assessment of established set-theoretic 
practice. It takes {\sf ZFC} and its large-cardinal extensions as the 
de facto benchmark of contemporary set-theoretic practice, not as 
indubitable a priori truths. This immunizes it against foundationalist 
objections while providing substantive guidance to working 
mathematicians.
  
\subsection{The role of {\sf ZFC} and the distinction from set-theoretic pluralism}

One might object that appealing to {\sf ZFC} merely relocates the pluralist 
challenge: if pluralism is viable for {\sf ZFC} itself, why should its 
arithmetical verdicts carry epistemic weight? This objection rests on an equivocation between higher-order 
set-theoretic indeterminacy and first-order arithmetical consequence.

First, the epistemic force of ZFC's arithmetical consequences derives not from metaphysical certainty but from a fact of set-theoretic practice: $\mathsf{Con}(\mathsf{PA})$ and the Paris-Harrington principle are provable 
in {\sf ZFC} and, being arithmetical, fall under Shoenfield absoluteness. They are \(\Sigma_2^1\)-absolute and thus have the same truth value across all transitive models and forcing extensions. Set-theoretic plurality converges at the arithmetic level.

Second, the comparison with set-theoretic pluralism must be delimited by subject matter. The central locus of set-theoretic pluralism is the 
Continuum Hypothesis, undecidable by all standard large-cardinal axioms extending {\sf ZFC}. Friedman's arithmetical sentences, though independent of {\sf PA} and often of {\sf ZFC}, are decidable by sufficiently strong large-cardinal principles. Their decidability anchors their position in our graded spectrum.
 Consequently, the choice between $\mathsf{PA}+\phi$ and $\mathsf{PA}+\neg\phi$ for these sentences does not encounter {\sf CH}'s deep indeterminacy; it reduces to which large-cardinal axiom is better supported by extrinsic evidence.

{\sf CH} resists large-cardinal resolution altogether and represents a deeper, more intractable pluralism. Our graded non-pluralism is therefore not a comprehensive response to set-theoretic pluralism, nor a solution to {\sf CH}, but a narrower proposal addressed specifically to arithmetical sentences whose status is tied to the large-cardinal hierarchy. 

This restricted scope is a virtue: it exploits the well-ordered structure of that hierarchy to break the epistemic tie at the arithmetic level, while remaining agnostic about the higher-order indeterminacy {\sf CH} exemplifies. The connection to G\"odel's program is direct: the capacity of large-cardinal axioms to settle lower-level arithmetic provides incremental, practice-based justification for those axioms, even if pluralist challenges persist at the set-theoretic summit.

Third, our thesis does not require {\sf ZFC} to be indubitable, only to be the de facto benchmark of contemporary set-theoretic practice. For arithmetical sentences decided by {\sf ZFC}, its verdict is grounded in the extrinsic evidence articulated in Maddy's naturalistic tradition. Where {\sf ZFC} itself is genuinely pluralistic (Case Three), our framework remains agnostic.
 
Fourth, the argument is not question-begging. A radical set-theoretic pluralist who rejects {\sf ZFC} outright is not our target; such a stance collapses into global skepticism. Our argument addresses those who, like Koellner, take set-theoretic practice seriously but have not reckoned with Friedman's results.

Fifth, even if one remains a pluralist about {\sf ZFC}, our graded framework is modular: for any accepted background theory \(T\), the justification for choosing between $\mathsf{PA}+\phi$ and $\mathsf{PA}+\neg\phi$ depends on whether \(T\) proves \(\phi\) or \(\neg \phi\), and on the independent epistemic warrant of \(T\). The arithmetical verdict inherits its normative force from the practice-based justification of \(T\). The upshot is that arithmetical pluralism cannot be settled in isolation from set-theoretic commitments.

\subsection{The Graded Spectrum}
The justification for choosing between $\mathsf{PA}+\phi$ and 
$\mathsf{PA}+\neg\phi$ varies with the set-theoretic strength of the 
independent sentence $\phi$. We distinguish three strata.
  
\subsubsection{{\sf ZFC}-provable sentences}
For independent arithmetical sentences provable in {\sf ZFC}, there are decisive reasons to choose the {\sf ZFC}-provable extension. {\sf ZFC} is widely 
accepted as the foundation of classical mathematics, and sentences 
provable within it are mathematically settled. The Paris--Harrington 
principle, for instance, is provable in {\sf ZFC}, making 
$\mathsf{PA}+\mathsf{PH}$ clearly preferable to $\mathsf{PA}+\neg\mathsf{PH}$.
 
\subsubsection{Sentences requiring small large cardinals}
For sentences whose proof requires small large cardinals, justification 
depends on the status of those cardinals. Small large cardinals are 
well-justified in the literature on several grounds:\footnote{For more discussions about the justification of small large cardinals, we refer to Koellner [2009, 2010, 2013].} intrinsic 
naturalness (they arise from reflection principles or generalizations 
of simpler principles),\footnote{In set theory, reflection principles assert that the universe reflects properties of its initial segments.} structural coherence (they fit into a coherent, 
linearly ordered hierarchy without the inconsistencies seen with very 
large cardinals), extrinsic utility (they calibrate the consistency 
strength of important statements in second-order arithmetic, descriptive set theory, and inner model theory), and consistency with minimalism (they do not contradict $\sf V=L$, making them conservative 
over the constructible universe and relatively uncontroversial). Those 
who accept small large cardinals therefore have strong theoretical 
reasons to accept the arithmetical sentences that depend on them.


\subsubsection{Sentences requiring medium or very large cardinals} 
For arithmetical sentences requiring medium or very large cardinals, 
acceptance hinges on the justification of those cardinals.   Koellner [2010, 2013] proposes three criteria for their justification:
\begin{itemize}
 \item Structural Coherence with the Interpretability Hierarchy: Large cardinals provide a well-ordered path upward in interpretability strength, serving as natural benchmarks for comparing ``natural" theories from different mathematical domains.\footnote{Large cardinals provide consistency strength calibrations: many ``natural" mathematical statements are equiconsistent with some large cardinal axioms.}
  \item Naturalness and Intrinsic Justification: 
      They often arise from reflection principles or generalizations of smaller principles.
  \item Extrinsic and Pragmatic Justification: Their mathematical fruitfulness—e.g., connections to determinacy, inner model theory, and consistency calibration—supports their role in Gödel's program.
\end{itemize}

There is extensive literature on the justification of large cardinals (e.g., [Koellner, 2009, 2010, 2013]; [Bagaria \& Ternullo, 2025]). Koellner [2010, 2013] argues that medium large cardinals (e.g., Measurable, Strong, Woodin) are strongly supported by mathematical practice and can be justified based on the above criteria. 
In contrast, very large cardinals (e.g., Supercompact, Huge) occupy a more contentious position.

The philosophical landscape regarding the status of medium and very large cardinals is divided: the realist/G\"odelian view 
(Woodin, Steel, Martin) holds that extrinsic success and hierarchical 
coherence provide compelling evidence; the skeptical/restrictive view 
(Feferman) maintains that such cardinals lack genuine intrinsic 
justification and are merely instrumentally useful;   the set-theoretic 
pluralist view (Hamkins) suggests that beyond a certain point the 
set-theoretic universe may be inherently pluralistic, with no single 
``true'' extension of {\sf ZFC}. 

The justification of very large cardinals remains unsettled and is deeply tied to the future of Gödel's program. Their acceptance often depends on one's philosophical stance toward set-theoretic truth, the nature of mathematical intuition, and the weight given to empirical fruitfulness in mathematics. Their justification is ongoing and awaits further mathematical and philosophical development.

Several factors contribute to the lack of settled justification for very large cardinals:
\begin{itemize}
  \item Intrinsic Evidence:  The definitions of very large cardinals are highly technical, relying on complex embedding properties or advanced reflection principles that are far removed from intuitive, pre-theoretic notions of ``largeness". Unlike small large cardinals, very large cardinals are often viewed as lacking immediate evidentiary force. Prominent critics, such as Feferman, have argued that they fail to meet the standard of self-evidence expected of foundational axioms.
  \item Extrinsic Evidence: While these cardinals have demonstrated remarkable utility—for example, in inner model theory, determinacy, and consistency-strength hierarchy—their extrinsic justification is still being explored and is not yet regarded as conclusive. Skeptics question whether their mathematical usefulness renders them truly indispensable, or merely convenient within certain research programs.                  
  \item Consistency Concerns:  Very large cardinals approach known consistency limits of large cardinal axioms (e.g., Kunen's theorem).\footnote{Kunen's inconsistency theorem marks a known limit, which shows that Reinhardt cardinals (embedding $j: \sf V \rightarrow V$) are incompatible with the Axiom of Choice.} Large cardinals beyond supercompact operate in a region where consistency is only conjectural, not yet secured by canonical inner models.
\end{itemize}

Very large cardinals  are not as well-justified as small large cardinals. Their legitimacy remains philosophically contentious, deeply tied to the success of Gödel's program, and dependent on future mathematical discoveries, such as progress in the Inner Model Program.
While they are not yet  well-justified, very large cardinals are central to contemporary set-theoretic research and form the key frontier in the search for new axioms. Their status exemplifies the dynamic, evolving nature of the philosophy of mathematics, where justification is often provisional and guided by the interplay of intrinsic motivation, extrinsic fruitfulness, and ongoing theoretical investigation. 

Due to the lack of settled justification for very large cardinals, defending non-pluralism for arithmetic sentences requiring very large cardinals remains open: there is currently no decisive theoretical reason to choose between $\mathsf{PA} + \phi$ and $\mathsf{PA} + \neg \phi$ for arithmetic sentences $\phi$ requiring very large cardinals. For instance, arithmetical sentences that imply the consistency of Huge cardinals [H. Friedman, 2025] are not yet universally accepted, and their acceptance hinges on future justifications of Huge cardinals.
Even if such sentences are decided by large cardinal axioms, they may still be viewed as indeterminate by those who reject those axioms.

\subsubsection{Merits of the graded view}
Our graded epistemic non-pluralism has several advantages:
\begin{itemize}
  \item \emph{Reflects actual mathematical practice}: Mathematicians routinely accept \(\mathsf{ZFC}\)-provable statements, treat small large cardinals with cautious optimism, and diverge on very large ones.
 \item \emph{Aligns with Koellner's layered view}: Koellner [2009] rejects pluralism for first-order arithmetic, advocates non-pluralism for second-order arithmetic, and leaves pluralism open for third-order arithmetic and beyond. Our view extends his hierarchical non-pluralism downward to arithmetical sentences of varying large-cardinal 
strength.
\item \emph{Incorporates Friedman's discoveries}: Arithmetical sentences 
independent of {\sf ZFC} at the level of large cardinals reveal that arithmetical 
truth can be entangled with set-theoretic truth.            
  \item \emph{Acknowledges stratified justification}: Theoretical reason varies with the epistemic status of the axioms involved, reflecting the evolving character of mathematical justification.
\end{itemize}

We distinguish epistemic pluralism---the view that both extensions are 
equally justified---from epistemic agnosticism---the view that neither 
is currently known. Our graded non-pluralism acknowledges that the 
strength of justification for choosing $\phi$ over $\neg\phi$ varies. 
For sentences at the level of Huge cardinals, our justification is 
currently weak; here we are agnostic. But agnosticism is not pluralism: 
it is an invitation to future mathematical progress, i.e., to G\"odel's 
program. We resolve the selection problem for {\sf PA} only when our best set theory ({\sf ZFC} + large cardinals) supplies the solution. 

\section{Conclusion}\label{conclusion} 
We have examined arithmetical pluralism through G\"odelian incompleteness, distinguishing epistemic, semantic, and ontological dimensions and focusing on the epistemic intra-framework variant. Koellner's defence of non-pluralism for first-order arithmetic, grounded in the alleged clarity of the natural-number conception, fails: it overlooks Friedman's concrete incompleteness results, which yield arithmetical---even \(\Pi_1^0\)---sentences whose proof strength reaches large cardinals (Subtle, Huge). This forces the recognition that our conception of the standard model, if characterized solely by arithmetical truth, cannot by itself decide these sentences; we must look to set-theoretic principles. Consequently, defending non-pluralism for first-order arithmetic cannot be isolated from large-cardinal justification.

We have defended \(\mathsf{PA} + \mathsf{Con}(\mathsf{PA})\) on multiple grounds---logical coherence, interpretability strength, hierarchical coherence, unifying power, and philosophical justification---and shown that the selection problem for concrete independent sentences (Case Three, Section \ref{selection for concrete}) is tied to G\"odel's program. In response, we proposed graded epistemic arithmetical non-pluralism: justification for choosing between \(\mathsf{PA} + \phi\) and \(\mathsf{PA} + \neg \phi\) varies with \(\phi\)'s set-theoretic strength. For \(\mathsf{ZFC}\)-provable sentences, we have decisive reason to adopt the \(\mathsf{ZFC}\)-provable extension; for sentences requiring small large cardinals, justification remains robust, grounded in established set-theoretic practice; for sentences requiring medium or very large cardinals, the case is weaker and depends on the contentious justification of the corresponding large cardinal axioms. Defending non-pluralism for sentences at the level of very large cardinals thus requires engaging with ongoing set-theoretic debates. Such cardinals lack the consensus afforded to smaller ones; their legitimacy is philosophically contested and tied to the future of Gödel's program. Consequently, the selection problem for arithmetical sentences at this level remains open pending further set-theoretic advances.

Our conclusions are restricted to the epistemic intra-framework debate. We take no stand on semantic or ontological pluralism. Our graded view extends Koellner's layered non-pluralism downward to arithmetical sentences of varying large-cardinal strength. The central contribution is to show that arithmetical pluralism cannot be settled in isolation from set-theoretic philosophy: defending epistemic intra-framework non-pluralism necessarily engages G\"odel's program and very large cardinals. This, we submit, reshapes our understanding of mathematical objectivity, theory choice, and the epistemology of mathematics.

\section*{{\bf APPENDIX}}
\section*{}\label{proof details} 
\begin{theorem}\label{indep of PH}
\begin{enumerate}[(1)]
  \item $\mathsf{PA}+\sf Con(\mathsf{PA})\nvdash \neg \sf Pr_{\mathsf{PA}}(\neg \sf PH)$.
  \item  $\mathsf{PA}+1$-$\sf Con(\mathsf{PA})\nvdash \neg \sf Pr_{\mathsf{PA}}(\neg \sf PH)$.
  \item $\mathsf{PA}\vdash 2$-$\sf Con(\mathsf{PA})\leftrightarrow \sf RFN_{\Sigma^0_2}(\mathsf{PA})$.\footnote{Let $\sf RFN_{\Sigma^0_n}(\mathsf{PA})$ be the  $\Sigma^0_n$ uniform reflection principle for $\mathsf{PA}$, which denotes the sentence $\forall \varphi\in \Sigma^0_n (\sf Pr_{\mathsf{PA}}(\varphi)\rightarrow \sf True_{\Sigma^0_n}(\varphi))$, where $\sf True_{\Sigma^0_n}(\varphi)$ is the truth definition for $\Sigma^0_n$ formulas.}
  \item   $\mathsf{PA}+2$-$\sf Con(\mathsf{PA})\vdash \neg \sf Pr_{\mathsf{PA}}(\neg \sf PH)$.
\end{enumerate}
\end{theorem}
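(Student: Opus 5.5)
The plan rests on two standard facts, used throughout. First, $\mathsf{PH}$ is a $\Pi^0_2$ sentence. Second, by Paris--Harrington, $\mathsf{PA}\vdash \mathsf{PH}\leftrightarrow \mathsf{RFN}_{\Sigma^0_1}(\mathsf{PA})$, i.e. $\mathsf{PH}$ is $\mathsf{PA}$-provably equivalent to $1$-$\mathsf{Con}(\mathsf{PA})$. So $\neg\mathsf{Pr}_{\mathsf{PA}}(\neg\mathsf{PH})$ amounts, provably in $\mathsf{PA}$, to $\mathsf{Con}(\mathsf{PA}+1\text{-}\mathsf{Con}(\mathsf{PA}))$. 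We also use the formalized form of this equivalence, which is provable in $\mathsf{PA}$ because it is a finite derivation: $\mathsf{PA}\vdash \mathsf{Pr}_{\mathsf{PA}}(\ulcorner\mathsf{PH}\leftrightarrow 1\text{-}\mathsf{Con}(\mathsf{PA})\urcorner)$.

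\emph{Items (1) and (2).} (1) follows from (2), since $\mathsf{PA}+1\text{-}\mathsf{Con}(\mathsf{PA})\vdash\mathsf{Con}(\mathsf{PA})$. For (2), let $T=\mathsf{PA}+1\text{-}\mathsf{Con}(\mathsf{PA})$, which is equivalent to $\mathsf{PA}+\mathsf{PH}$. By the equivalence above, $T\vdash\neg\mathsf{Pr}_{\mathsf{PA}}(\neg\mathsf{PH})$ would mean $T\vdash \neg\mathsf{Pr}_{\mathsf{PA}}(\neg 1\text{-}\mathsf{Con}(\mathsf{PA}))$, i.e. $T\vdash\mathsf{Con}(T)$. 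This contradicts G\"odel's second incompleteness theorem for the consistent r.e. theory $T$; consistency holds because $T$ is true. This is exactly the pattern the paper used in the footnote for $\mathsf{Con}(\mathsf{PA})$.

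\emph{Item (3).} This is the standard Smory\'nski/Kreisel--L\'evy equivalence, one level up from the $1$-consistency$\leftrightarrow\mathsf{RFN}_{\Sigma^0_1}$ fact quoted earlier. Both directions are argued inside $\mathsf{PA}$, using the partial truth predicate $\mathsf{True}_{\Sigma^0_2}$ and its Tarski clauses.
\begin{itemize}
\item[($\Leftarrow$)] Suppose $\mathsf{PA}$ proves $\exists x\phi(x)$ with $\phi\in\Pi^0_1$ and also proves each $\neg\phi(\bar n)$. Reflection for the $\Sigma^0_2$ sentence $\exists x\phi(x)$ gives $\mathsf{True}(\phi(\bar n))$ for some $n$. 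Reflection for the $\Sigma^0_1$ sentence $\neg\phi(\bar n)$, which is provable, gives $\neg\mathsf{True}(\phi(\bar n))$. This is a contradiction.
\item[($\Rightarrow$)] Suppose $\mathsf{Pr}_{\mathsf{PA}}(\exists x\psi(x))$ with $\psi\in\Pi^0_1$, but $\neg\mathsf{True}_{\Sigma^0_2}(\exists x\psi)$. Then for every $n$, $\neg\psi(\bar n)$ is a true $\Sigma^0_1$ sentence. By provable $\Sigma^0_1$-completeness, $\mathsf{Pr}_{\mathsf{PA}}(\neg\psi(\dot n))$ holds for all $n$, which witnesses the failure of $2$-consistency.
\end{itemize}
The delicate point is formalizing ``for every $n$'': it needs the uniform version $\forall n\,(\mathsf{True}_{\Sigma^0_1}(\sigma(\dot n))\to\mathsf{Pr}(\sigma(\dot n)))$, which is available in $\mathsf{PA}$. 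It also needs $n$-consistency read in its formalized form, with a quantifier over $n$ inside.

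\emph{Item (4).} Work in $\mathsf{PA}+2\text{-}\mathsf{Con}(\mathsf{PA})$, and by (3) use $\mathsf{RFN}_{\Sigma^0_2}(\mathsf{PA})$. Suppose $\mathsf{Pr}_{\mathsf{PA}}(\neg\mathsf{PH})$. Then $\neg\mathsf{PH}$ is $\Sigma^0_2$, so reflection yields $\neg\mathsf{PH}$. On the other hand, $\mathsf{RFN}_{\Sigma^0_2}$ implies $\mathsf{RFN}_{\Sigma^0_1}(\mathsf{PA})$, and hence $\mathsf{PH}$ by the Paris--Harrington equivalence. This is a contradiction, so $\neg\mathsf{Pr}_{\mathsf{PA}}(\neg\mathsf{PH})$.

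Alternatively, one can avoid reflection altogether in (4) and (2) by observing that $2$-$\mathsf{Con}(\mathsf{PA})$ implies $\mathsf{Con}(\mathsf{PA}+1\text{-}\mathsf{Con}(\mathsf{PA}))$, which by the formalized equivalence is $\neg\mathsf{Pr}_{\mathsf{PA}}(\neg\mathsf{PH})$.

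I expect the main obstacle to be (3): getting the formalized definitions of $n$-consistency and partial truth to match precisely. Everything else is routine once the Paris--Harrington equivalence is cited.
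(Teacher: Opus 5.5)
Your proposal is correct and follows the paper's proof essentially step for step for (2)--(4): G2 applied to $\mathsf{PA}+1\text{-}\mathsf{Con}(\mathsf{PA})$ via the Paris--Harrington equivalence, the same two-direction partial-truth argument with formalized $\Sigma^0_1$-completeness for (3), and $\Sigma^0_2$-reflection applied to $\neg\mathsf{PH}$ for (4). The only difference is that you obtain (1) from (2) by monotonicity, whereas the paper runs a separate G2 argument for $\mathsf{PA}+\mathsf{Con}(\mathsf{PA})$ that uses only the weaker fact $\mathsf{PA}\vdash\mathsf{PH}\rightarrow\mathsf{Con}(\mathsf{PA})$; both routes are fine.
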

\begin{proof}\label{}
(1): It suffices to show that $\mathsf{PA}+\sf Con(\mathsf{PA})\nvdash \sf Con(\mathsf{PA}+\sf PH)$. 
Suppose $\mathsf{PA}+\sf Con(\mathsf{PA})\vdash \sf Con(\mathsf{PA}+ \sf PH)$.  Since $\mathsf{PA}\vdash \sf PH\rightarrow \sf Con(\mathsf{PA})$, we have $\mathsf{PA}+\sf PH$ implies $\mathsf{PA}+\sf Con(\mathsf{PA})$. Thus $\mathsf{PA}+ \sf Con(\mathsf{PA})\vdash \sf Con(\mathsf{PA}+\sf Con(\mathsf{PA}))$, which contradicts {\sf G2}.

(2): Suppose $\mathsf{PA}+1$-$\sf Con(\mathsf{PA})\vdash \neg \sf Pr_{\mathsf{PA}}(\neg \sf PH)$. I.e., $\mathsf{PA}+1$-$\sf Con(\mathsf{PA})\vdash \sf Con(\mathsf{PA}+\sf PH)$. Since over $\mathsf{PA}$, $\sf PH$ is equivalent with $1$-$\sf Con(\mathsf{PA})$, we have $\mathsf{PA}+1$-$\sf Con(\mathsf{PA})\vdash Con(\mathsf{PA}+1$-$\sf Con(\mathsf{PA}))$, which contradicts {\sf G2}.

(3): We work in $\mathsf{PA}$. Suppose $2$-$\sf Con(\mathsf{PA})$ holds. We show that $\sf RFN_{\Sigma^0_2}(\mathsf{PA})$ holds. Suppose $\theta=\exists x\forall y\phi(x,y)$ is a $\Sigma^0_2$ sentence and $\sf Pr_{\mathsf{PA}}(\theta)$ holds but $\neg \sf True_{\Sigma^0_2}(\theta)$ holds.  Then $\sf True_{\Pi^0_2}(\neg\theta)$ holds. Thus, for any $n$, $\sf True_{\Sigma^0_1}(\exists y\neg \phi(\overline{n},y))$ holds. Since $\mathsf{PA}$ is $\Sigma^0_1$-complete, for any $n$, $\sf Pr_{\mathsf{PA}}(\neg\forall y \phi(\overline{n},y))$ holds. This contradicts $2$-$\sf Con(\mathsf{PA})$.

Suppose $\sf RFN_{\Sigma^0_2}(\mathsf{PA})$ holds. We show that $2$-$\sf Con(\mathsf{PA})$ holds. Suppose $2$-$\sf Con(\mathsf{PA})$ does not hold. Then there exists a $\Sigma^0_2$ sentence $\theta=\exists x\forall y\phi(x,y)$ such that $\sf Pr_{\mathsf{PA}}(\theta)$ holds and for any $n$, $\sf Pr_{\mathsf{PA}}(\neg\forall y \phi(\overline{n},y))$. Since $\neg\forall y \phi(\overline{n},y)$ is a $\Sigma^0_1$ sentence, we have $\sf True_{\Sigma^0_1}(\exists y\neg \phi(\overline{n},y))$ holds for any $n$. Thus $\sf True_{\Pi^0_2}(\forall x\exists y\neg \phi(x,y))$ holds, and hence $\neg \sf True_{\Sigma^0_2}(\exists x\forall y \phi(x,y))$ holds, which contradicts $\sf RFN_{\Sigma^0_2}(\mathsf{PA})$.

(4): We work in $\mathsf{PA}+2$-$\sf Con(\mathsf{PA})$.  Suppose $\sf Pr_{\mathsf{PA}}(\neg\sf PH)$ holds. Since $\sf PH$ is a $\Pi^0_2$ sentence, $\neg \sf PH$ is a $\Sigma^0_2$ sentence. By (3), $2$-$\sf Con(\mathsf{PA})$ is equivalent with $\sf RFN_{\Sigma^0_2}(\mathsf{PA})$. Thus, $\sf \neg PH$ holds. Since over $\mathsf{PA}, \sf PH$ is equivalent with $1$-$\sf Con(\mathsf{PA})$, we have $1$-$\sf Con(\mathsf{PA})$ does not hold. On the other hand, $2$-$\sf Con(\mathsf{PA})$ implies $1$-$\sf Con(\mathsf{PA})$, which leads to a contradiction.
\end{proof}


\section*{Acknowledgements}
I thank the anonymous referees for their careful reading and constructive comments, which helped improve the organization and clarity of the paper and strengthen several arguments. I also thank the editor for his careful handling of the manuscript. I am grateful to Volker Halbach and Tim Williamson for introducing and discussing Picollo and Waxman's (2025) work on arithmetical pluralism in a seminar on Logic and Philosophy of Logic; this paper was initially inspired by that work and eventually developed into a paper focusing on Koellner's treatment of epistemic arithmetical pluralism. I also thank Harvey Friedman for generously sharing with me his work on concrete incompleteness and for helpful discussions of his research. I am grateful to colleagues for their encouragement and interest.


\end{document}